\newtheorem{theorem}{Theorem}[section]
\newtheorem{lemma}[theorem]{Lemma}
\newtheorem{proposition}[theorem]{Proposition}
\theoremstyle{definition}
\newtheorem{definition}[theorem]{Definition}
\newtheorem{example}[theorem]{Example}
 \newtheorem{conjecture}[theorem]{Conjecture}
\theoremstyle{remark}
\newtheorem{remark}[theorem]{Remark}
\numberwithin{equation}{section}
\newcommand{\be}{\begin{enumerate}}
\newcommand{\ee}{\end{enumerate}}
\newcommand{\la}{\lambda}
\newcommand{\al}{\alpha}
\newcommand{\zz}{\mathbb{Z}}
\newcommand{\qq}{\mathbb{Q}}
\begin{document}

\title[The Smith Normal Form of a Matrix]{The Smith Normal Form of a
Matrix Associated with Young's Lattice}
%\title[running head]{Smith Normal Form of a Matrix}

\author{Tommy Wuxing Cai}
\address{School of Sciences,
SCUT, Guangzhou 510640, China}
\email{caiwx@scut.edu.cn}

\author{Richard P. Stanley}

\thanks{The first author thanks M.I.T.\ for providing a great research
environment, the China Scholarship Council for partial support, and
the Combinatorics Center at Nankai University for their hospitality
when this work was initiated.  The second author was partially
supported by NSF grant DMS-1068625.}

\address{Department of mathematics,
M.I.T., Cambridge, MA, USA}
\email{rstan@math.mit.edu}

\keywords{Young's lattice; Schur function; Smith normal form}
\subjclass[2000]{Primary: 05E05; Secondary: 17B69, 05E10}
\begin{abstract}
We prove a conjecture of Miller and Reiner on the Smith normal form of
the operator $DU$ associated with a differential poset for the special
case of Young's lattice. Equivalently, this operator can be
described as $\frac{\partial}{\partial p_1}p_1$ acting on homogeneous
symmetric functions of degree $n$.
\end{abstract}

\maketitle

\section{Introduction}

Let $R$ be a commutative ring with 1 and $M$ an $m\times m$ matrix
over $R$. We say that $M$ has a \emph{Smith normal form} (SNF) over
$R$ if there exist matrices $P,Q\in \mathrm{SL}(m,R)$ (so $\det P=\det
Q=1$) such that $PMQ$ is a diagonal matrix
$\mathrm{diag}(d_1,d_2,\dotsc,d_m)$ with $d_i\mid d_{i+1}$ ($1\leq
i\leq m-1$) in $R$. It is well-known that if $R$ is an integral domain
and if the SNF of $M$ exists, then it is unique up to multiplication
of each $d_i$ by a unit $u_i$ (with $u_1\cdots u_m=1$).

We see that Smith normal form is a refinement of the determinant,
since $\det M=d_1d_2\cdots d_m$.  In the case that $R$ is a principal
ideal domain (PID), it is well-known that all matrices in $R$ have a
Smith normal form. Not very much is known in general.

In this work we are interested in the ring $\zz[x]$ of
integer polynomials in one variable. Since $\zz[x]$ is not a PID, a
matrix over this ring need not have an SNF. For instance,
it can be shown that the
matrix $\left[\begin{array}{cc} x & 0  \\ 0 & x+2\\
\end{array}\right]$ does not have a Smith normal form over $\zz[x]$.

We will use symmetric function notation and terminology
from \cite{ec2}.  The ring $\Lambda$ of symmetric functions has
several standard $\zz$-bases: monomial symmetric functions $m_\la$,
elementary symmetric functions $e_\la$, complete symmetric functions
$h_\la$ and Schur functions $s_\la$. The power sum symmetric functions
$p_\la$ form a $\qq$-basis of $\Lambda_{\qq}=\Lambda\otimes_\zz \qq$.
The ring $\Lambda_{\qq}$ is a graded $\qq$-algebra:
$\Lambda_{\qq}=\bigoplus_{n=0}^\infty \Lambda_{\qq}^n$, where
$\Lambda_{\qq}^n$ is the vector space spanned over $\qq$ by
$\{s_\la:\la\vdash n\}$.  Similarly, $\Lambda$ is a graded ring.

Regard elements of $\Lambda_\qq^n$ as polynomials in the $p_j$'s.
Define a linear map $T_1$ on $\Lambda_{\qq}^n$ by
\begin{align}\label{D:p1*p1}
T_1(v)=\frac{\partial}{\partial p_1}(p_1v),\ \ v\in\Lambda_{\qq}^n.
\end{align}

\begin{remark} \label{psum}
Note that the power sum $p_\lambda$ is an eigenvector
of $T_1(v)$ with corresponding eigenvalue $1+m_1(\lambda)$, where
$m_1(\lambda)$ denotes the number of 1's in $\lambda$.
\end{remark}

Denote by $M=M^{(n)}_1$ the matrix of $T_1$ with respect to the basis
$\{s_\la:\la\vdash n\}$ (arranged in, say, lexicographic order).
It is known and easy to see that $M$ is an integer symmetric matrix of
order $p(n)$,
the number of partitions of $n$. Let
$\la(n)=(p(n)-p(n-1),\dotsc,p(2)-p(1),p(1))$, so $\lambda(n)$ is a
partition of $p(n)$. Let the conjugate of $\la(n)$ be
$\la(n)'=(j_{p(n)-p(n-1)},\dotsc,j_2,j_1)$ (so
$j_{p(n)-p(n-1)}=n$). We prove the following  result.
\begin{theorem} \label{thm1}
Let $\al_k(x)=a_1(x)a_2(x)\dotsm a_k(x)$ with $a_i(x)=i+x$
($i=1,2,\dotsc,n-1$) and $a_n(x)=n+1+x$.
There exist $P(x),Q(x)\in \mathrm{SL}(p(n),\zz[x])$ such that
$P(x)(M+xI_{p(n)})Q(x)$ is the following diagonal matrix:
%\begin{align}
%\mathrm{diag}(h_1(x),h_1(t)h_2(x),\dotsc,h_1(t)h_2(x)\dotsm h_{p(n)}(x)),
%\end{align}
%where the $h_i(x)$'s are all of the form $1,1+x,2+x,\dots, n-1+x,
%n+1+x$. More precisely, the diagonal entries contain no repeated
%factors, and the total number of times that $j+x$ appears as a factor is
%equal to $p(n+1-j)-p(n-j)$, for $1\leq j\leq n+1$.
\begin{align}
\mathrm{diag}(1,1,\dotsc,1,\al_{j_1}(x),\al_{j_2}(x),\dotsc,\al_{j_{p(n)-p(n-1)}}(x)).
\end{align}
%n+1+x$. More precisely, the diagonal entries contain no repeated
%factors, and the total number of times that $j+x$ appears as a factor is
%equal to $p(n+1-j)-p(n-j)$, for $1\leq j\leq n+1$.

\end{theorem}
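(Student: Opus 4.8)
\section*{Proof proposal}

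The plan is to analyze $M+xI$ first over $\mathbb{Q}[x]$, where Remark~\ref{psum} does essentially all the work, and then to carry out the genuinely delicate step: showing that the same diagonal form is reached over $\mathbb{Z}[x]$, i.e.\ that no denominators are needed. Over $\mathbb{Q}$, the power sums $p_\lambda$ diagonalize $T_1$, with eigenvalue $1+k$ occurring with multiplicity $m_{k+1}:=\#\{\lambda\vdash n:m_1(\lambda)=k\}=p(n-k)-p(n-k-1)$. Hence over the PID $\mathbb{Q}[x]$ the nonunit invariant factors of $M+xI$ are $e_t(x)=\prod_{j}(x+j)$, where for $t=1,\dots,r$ (with $r=p(n)-p(n-1)$) the product runs over those shifts $j\in\{1,2,\dots,n-1,n+1\}$ with $m_j:=p(n-j+1)-p(n-j)\ge r-t+1$, the remaining $p(n-1)$ invariant factors being $1$. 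A short combinatorial check — $\lambda(n)=(m_1,m_2,\dots)$ with the single zero term $m_n$ deleted, and then conjugating — identifies the multiset $\{e_1,\dots,e_r\}$ with $\{\alpha_{j_1}(x),\dots,\alpha_{j_{p(n)-p(n-1)}}(x)\}$ exactly as in the statement (the factor $x+n$ never appears, which is why $a_n(x)=n+1+x$). So everything reduces to proving that this $\mathbb{Q}[x]$-diagonalization already holds integrally.

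For that I would induct on $n$ using a block decomposition. Since $T_1=DU$ with $D=\partial/\partial p_1$, $U=p_1\cdot$, the relation $DU-UD=I$ gives $T_1^{(n)}=UD+I$ on $\Lambda^n$. Because $\Lambda\otimes\mathbb{F}_\ell=\mathbb{F}_\ell[h_1,h_2,\dots]$ is an integral domain, multiplication by $p_1$ is injective modulo every prime, so $\Lambda^{n}/p_1\Lambda^{n-1}$ is torsion free and $p_1\Lambda^{n-1}$ is a $\mathbb{Z}$-direct summand of $\Lambda^n$, with explicit complement $C^n=\langle s_\lambda:\lambda\vdash n,\ m_1(\lambda)=0\rangle$. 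Using the identity $T_1(p_1 f)=p_1(T_1+I)f$ together with the Pieri rule, in the resulting adapted $\mathbb{Z}$-basis one obtains
\[
M^{(n)}_1+xI=\begin{pmatrix} M^{(n-1)}_1+(x+1)I & B\\ 0 & (x+1)I_r\end{pmatrix},\qquad r=p(n)-p(n-1),
\]
where $B$ is the $0$--$1$ matrix with $B_{\nu\lambda}=1$ iff $\lambda\gtrdot\nu$ and $m_1(\lambda)=0$. Iterating this is what peels off the factors $x+1,x+2,\dots$ one shift at a time and produces the polynomials $\alpha_k$.

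The inductive step then goes as follows. By the inductive hypothesis the top-left block reduces over $\mathbb{Z}[x]$ to $\mathrm{diag}(1^{p(n-2)},g_1,\dots,g_{r'})$ with $r'=p(n-1)-p(n-2)$ and each $g_i$ a product of the polynomials $x+2,x+3,\dots$ (the shifted factors, in particular with no factor $x+1$); in fact $(x+1)g_i$ is one of the target polynomials $\alpha^{(n)}_{\,j}$. Transporting this reduction to $M^{(n)}_1+xI$ and then clearing, using the $1$'s in the top-left block, the rows of $B$ sitting above them, one splits off $I_{p(n-2)}$ and is left with a block
$\left(\begin{smallmatrix}\mathrm{diag}(g_1,\dots,g_{r'}) & \widetilde B\\ 0 & (x+1)I_r\end{smallmatrix}\right)$ of size $p(n)-p(n-2)$; the remaining task is to show this is $\mathbb{Z}[x]$-equivalent to $\mathrm{diag}(1^{r'},(x+1)^{\,r-r'},(x+1)g_1,\dots,(x+1)g_{r'})$, which together with the $p(n-2)$ ones already extracted gives the claimed form. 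Matching this with the theorem also uses the combinatorial identity that $\lambda(n-1)$ is $\lambda(n)$ with one largest part removed (equivalently $m^{(n-1)}_j=m^{(n)}_{j+1}$), from which the list $\{j_1,\dots,j_r\}$ for $n$ is obtained from the list for $n-1$ by incrementing each entry (with the maximal value $n-1$ sent to $n$) and then appending $r-r'$ ones.

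The main obstacle is the last $\mathbb{Z}[x]$-equivalence, which cannot be argued abstractly. A diagonal matrix such as $\mathrm{diag}(x+1,g_i)$ need not possess any Smith normal form over $\mathbb{Z}[x]$ — the paper's own example $\mathrm{diag}(x,x+2)$ is the simplest instance — so the off-diagonal block $\widetilde B$, which ultimately encodes covering relations in Young's lattice, has to be shown ``arithmetically generic enough'' modulo every prime for the reduction to exist and to yield exactly $(x+1)g_i$. Two features make this work: $M^{(n)}_1$ is itself an integer matrix with many $\pm1$ off-diagonal (covering) entries, and these survive the induction into $\widetilde B$ and serve as pivots; and the shifts $1,2,\dots,n-1$ form a run of \emph{consecutive} integers, so differences of the corresponding linear factors are units, with $n+1$ the only outlier, reachable through the consecutive run. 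The delicate prime is $\ell=2$, where $x+n-1$ and $x+n+1$ become congruent; controlling this case is precisely where the explicit structure of $B$ (equivalently, of $p_1^\perp$ restricted to $C^n$) is needed, and that is the technical heart of the argument.
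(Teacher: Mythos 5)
Your reduction over $\mathbb{Q}[x]$ is correct and cleanly identifies the target diagonal (including why the factor $x+n$ is absent and $a_n(x)=n+1+x$), and the block decomposition coming from $T_1U=U(T_1+I)$ is a legitimate and rather elegant way to organize the problem. But the proof has a genuine gap exactly where you say the ``technical heart'' lies: the claimed $\zz[x]$-equivalence of
$\left(\begin{smallmatrix}\mathrm{diag}(g_1,\dots,g_{r'}) & \widetilde B\\ 0 & (x+1)I_r\end{smallmatrix}\right)$
with $\mathrm{diag}(1^{r'},(x+1)^{r-r'},(x+1)g_1,\dots,(x+1)g_{r'})$ is asserted, not proved. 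This is not a routine verification: since $g_i(-1)=\prod_j(j-1)$ over the shifts $j\ge 2$ occurring in $g_i$, the ideal $(g_i,x+1)$ in $\zz[x]$ is generally proper, so whether the SNF even exists hinges entirely on precise arithmetic properties of $\widetilde B$. Moreover $\widetilde B$ is not the covering-relation matrix $B$ any more --- it is $B$ after being hit by the (unspecified) unimodular matrices that put $M^{(n-1)}_1+(x+1)I$ into SNF and after the clearing steps using the $1$'s, so the statement that its $\pm 1$ pivot entries ``survive the induction'' is exactly what would need to be proved, and nothing in the proposal controls it. In effect you have reduced the theorem to an unproved divisibility/unimodularity lemma of the same difficulty as the theorem itself.

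For comparison, the paper avoids this by never inducting on $n$: it works in the $h_\lambda$ basis, where $p_1\Lambda^{n-1}=h_1\Lambda^{n-1}$ has the transparent complement $\langle h_\lambda:m_1(\lambda)=0\rangle$ and every matrix entry of $\frac{\partial}{\partial p_1}p_1$ is computed explicitly (Lemma \ref{L:PropertyOfA}). Ordering the partitions along the rising strings $\la\nwarrow\la^+\nwarrow\cdots$ makes the matrix block triangular with bidiagonal diagonal blocks, and the explicit divisibility statement $f_1\cdots f_j\mid\beta^{kl}_j$ in (\ref{P:crucial}) --- which follows from the strict lower-triangularity (\ref{strictupper}) of the off-diagonal blocks --- is precisely the quantitative form of your phrase ``arithmetically generic enough''; it is what licenses the final cancellation. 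If you want to salvage your inductive framework, you would need to prove an analogue of (\ref{P:crucial}) for your $\widetilde B$ (e.g.\ that the entry sitting in the row of $g_i$ and a column of $(x+1)I_r$ is divisible by $g_i$ after the preceding operations), and that claim is where all the work is.
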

%As an example of Theorem~\ref{thm1}, let $n=6$. Then $7+x$ appears
%$p(0)=1$ time, $5+x$ appears $p(2)-p(1)=1$ time, $4+x$ appears
%$p(3)-p(2)=1$ time, $3+x$ appears $p(4)-p(3)=2$ times, $2+x$ appears
%$p(5)-p(4)=2$ times, and $1+x$ appears $p(6)-p(5)=4$ times. Hence the
%diagonal entries of the Smith normal form of $M+xI$ are seven 1's
%followed by
% $$ 1+x,1+x,(1+x)(2+x)(3+x),(1+x)(2+x)(3+x)(4+x)(6+x). $$
As an example of Theorem~\ref{thm1}, let us consider the case $n=6$. 
First $\la(6)=(4,2,2,1,1,1)$ and $\la(6)'=(6,3,1,1)$.
Hence the diagonal entries of an SNF of $M+xI$ are seven 1's
followed by
 $$1+x,1+x,(1+x)(2+x)(3+x),(1+x)(2+x)(3+x)(4+x)(5+x)(7+x).$$

In general, the number of diagonal entries equal to 1 is
$p(n-1)$.

%\textbf{Tommy:} You seem to be stating the theorem for $M+xI$ but
%proving it for $M$. You don't mention why these are equivalent. Why
%not just deal with the case $x=0$ throughout?

%If you want, you can
%mention Miller and Reiner's result relating it to $M+Ix$.  Can you
%also adjust the proof to take care of the more precise statement of
%the theorem given above? \bigskip
%\textbf{Richard:} I modified the proof a little and it's now clear by \ref{R%:idea1} and \ref{R:idea2}.

%I modified your more precise statement a little, by giving the cardinalities% of full strings a explicit description.

%\textbf{Tommy:} if we set $x=0$ then this will change to
%$p_n-p_{n+1}+p_{n+2}$, a fact already proved by Miller and Reiner.

%\textbf{Richard:} you mean the $x=0$ case of our theorem has been
%proved by Miller and Reiner?  But you also said that the $x=0$ case
%(with SNF of that form) will imply thi%s theorem? This is not very
%likely true.  I still can not prove the following:\\ If in $x=0$ case
%the SNF is the specialization of the diagonal matrix in our theorem
%to $0$, and if the $M+xI$ has SNF in $\zz[x]$,
%then the SNF of $M+xI$ has to be the diagonal matrix in our theorem.

%Because I can't rule out the possibility of $(x+1)^2$ as a factor of
%an entry on the diagonal of the final SNF.

The origin of Theorem~\ref{thm1} is as follows. Let $P$ be a
differential poset, as defined in \cite{dposet}
or \cite[{\S}3.21]{ec1}, with levels $P_0, P_1,\dots$. Let $\zz[x]P_n$
denote the free $\zz[x]$-module with basis $P_n$. Let $U,D$ be the up
and down operators associated with $P$. Miller and Reiner \cite{MR}
conjectured a certain Smith normal form of the operator
$DU+xI\colon \zz[x] P_n\to\zz[x] P_n$. Our result is equivalent to the
conjecture of Miller and Reiner for the special case of Young's
lattice $Y$. Our result also specializes to a proof of Miller's
Conjecture 14 in \cite{M}.  After proving the theorem, we state a
conjecture which generalizes it. It seems natural to try to generalize
our work to the differential poset $Y^r$ for $r\geq 2$, but we have
been unable to do this.

\section{The proof of the theorem}
Instead of Schur functions, we consider the matrix with respect to the
complete symmetric functions $\{h_\la\colon\lambda\vdash n\}$.  Since
$\{ s_\lambda\colon\lambda\vdash n\}$ and
$\{h_\lambda\colon\lambda\vdash n\}$ are both $\zz$-bases for
$\Lambda^n$, the Smith normal form does not change when we switch to
the $h_\lambda$ basis. We introduce a new ordering on the set
$\mathcal{P}_n$ of all partitions of $n$. The matrix $A$ with
respect to this new ordering turns out to be much easier to manipulate
than the original matrix. In fact, we show that $A+xI_{p(n)}$ can be turned into an upper
triangular matrix after some simple row operations. Then we use
more row/column operations to cancel the non-diagonal elements.
The resulting diagonal matrix is the SNF that we are looking for.

From now on, we fix a positive integer $n$. The case $n=1$ is trivial,
so we assume $n\geq 2$.

\subsection{A new ordering on partitions}
\begin{definition}
Let $\la=(\la_1,\la_2\dots,\la_i,1)$ with
 $\la_1\geq\la_2\geq\dotsm\geq\la_i\geq1$ ( $i\geq 1$).
We define $\la^+=(\la_1+1,\la_2,\dots,\la_i)$ and write
 $\la^+\nwarrow\la$.
We call a partition $\la$ \emph{initial} if there is no $\mu$ such that
 $\mu^+=\la$, i.e., $\lambda_1=\lambda_2$. We call a partition
 $\mu$ \emph{terminal} if $\mu^+$ is not
 well-defined, i.e., $m_1(\mu)=0$, where $m_i(\lambda)$ denotes the
 number of parts of $\lambda$ equal to $i$.
\end{definition}

For a sequence $\la^0, \la^1,\la^2,\dotsc,\la^t$ of partitions, we
write
\begin{align}\label{E:RS}
\la^t\nwarrow\la^{t-1}\nwarrow\cdots\nwarrow\la^0
 \end{align}
 and call it a \emph{rising string} of length $t$ if
 $\la^{i+1}\nwarrow \la^{i}$, i.e., $\la^{i+1}=(\la^i)^+$, for
 $0\leq i\leq t-1$.

In equation (\ref{E:RS}), if $\la^0$ is  initial and $\la^t$ is
terminal then we call \eqref{E:RS} a \emph{full} (rising)
string. Moreover, we say that $\la^0$ is the initial element of the
string and $\la^t$ is the terminal element of the string.
One cannot add a partition to a full string to make it longer.

A partition $\mu=(\mu_1,\mu_2,\dots,\mu_i)$ with
$\mu_1=\mu_2\geq\cdots\geq\mu_i\geq2$ is not in any rising string of
nonzero length. We also call this $\mu$ a (point) string of length
0; it is both initial and terminal.

Note that every partition $\mu$ is in exactly one full string; we
denote the terminal element of this string by $T(\mu)$. Thus
$T(\la)=T(\mu)$ if and only if $\la,\mu$ are in the same string.

\begin{definition}
We define a total ordering $\preceq$ on the set of partitions of $n$
as follows.
 \be
 \item For two partitions in the rising string (\ref{E:RS}), we
 define $\la^i\preceq\la^j$ for  $i\leq j$.
 \item For $\la,\mu$ in different full strings, we write
 $\la\preceq\mu$ if $T(\la)\leq_L T(\mu)$, i.e., $T(\la)$ is
 lexicographically less than $T(\mu)$.
 \ee
\end{definition}

We arrange the partitions in $\mathcal{P}_n$ from largest to smallest
according to the following order:
%list the partitions of $n$ as following :\\
\begin{align}\label{F:neworder}
\la^{11},\la^{12},\dotsc,\la^{1i_1}; \la^{21},\la^{22},\dotsc,\la^{2i_2};\dotsc;\la^{t1},\la^{t2},\dotsc,\la^{ti_t},
\end{align}
where $\la^{j1}\nwarrow\la^{j2}\nwarrow\cdots\nwarrow\la^{ji_j}$ is
the $j$th full string of partitions of $n$, and we use semicolons to
separate neighboring strings.  It's easy to see that $\la^{11}=(n)$,
$i_1=n$, $\la^{1i_1}=(1^n)$, and $t$ is the number of terminal
elements of $n$, which is equal to the number of partitions of $n$
with no part equal to $1$, viz., $p(n)-p(n-1)$. In fact, these
cardinalities $i_1,i_2,\dotsc, i_t$ of strings can be expressed
explicitly.  We will see this point after the following example.

\begin{example} \label{E:P6-ordering}
The following is the list of partitions of $6$:
 $$ 6,\ 51,\ 41^2,\ 31^3,\ 21^4,\ 1^6;\ 42,\ 321,\ 2^21^2;\ 3^2;\ 2^3.$$
The eigenvalues of $M$ arranged in accordance with this order are by
Remark~\ref{psum} as follows:
$$1,2,3,4,5,7;1,2,3;1;1.$$

On the other hand, we can arrange the eigenvalues of $M$ in the
following form:
\begin{align*}
p(6)-p(5)=4:&\;1\;\;1\;\;1\;\;1\\
p(5)-p(4)=2:&\;2\;\;2\\
p(4)-p(3)=2:&\;3\;\;3\\
p(3)-p(2)=1:&\;4\\
p(2)-p(1)=1:&\;5\\
p(1)=1:&\;7.
\end{align*}
We see that the eigenvalues (to the right side of the colons) form a
tableau (with constant rows and increasing columns) of shape
$\la(6)=(4,2,2,1,1,1)$.

Notice that the eigenvalues associated with the first string are
1,2,3,4,5, and 7 and they form the first column of the above
tableau. In fact, the eigenvalues corresponding to every string form a
column of the tableau. This is not a coincidence, but rather because
the eigenvalues corresponding to a string form a sequence of consecutive
integers starting from $1$: $1,2,3\dotsc,i$, except there is a gap for
those corresponding to the first string. Therefore the cardinality of
a string is the length of a column of the tableau.
\end{example}
We can easily formalize this argument to prove the following.
\begin{lemma}\label{L:stringcardinality}
Rearrange the cardinalities  of the strings $i_1,i_2,\dotsc, i_t$ in
(\ref{F:neworder}) in weakly decreasing order:
$J=(j_t,\dotsc,j_2,j_1)$. Then $J$ is exactly the conjugate of the
partition $\la(n)=(p(n)-p(n-1),\dotsc, p(2)-p(1),p(1))$ as defined in
the introduction.
\end{lemma}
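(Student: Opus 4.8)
The plan is to compute the cardinality of each full string explicitly and then count, for every $k\geq 1$, the full strings of cardinality at least $k$; the conjugacy statement will follow formally from that count. The first step uses the following description of the full string through a terminal partition $\tau$ (one with $m_1(\tau)=0$): if $\mu=(\mu_1,\dots,\mu_\ell)$ is not initial, i.e.\ $\mu_1>\mu_2$, then the unique $\nu$ with $\nu^+=\mu$ is $\nu=(\mu_1-1,\mu_2,\dots,\mu_\ell,1)$, so starting from $\tau$ and repeatedly passing to preimages under $(\cdot)^+$ recovers the entire string, ending at its initial element. If $\tau=(n)$ this produces $(n),(n-1,1),(n-2,1^2),\dots,(1^n)$, of cardinality $n$; if $\tau\neq(n)$ then $\tau_2\geq 2$ and it produces
$$\tau,\ (\tau_1-1,\tau_2,\dots,\tau_\ell,1),\ (\tau_1-2,\tau_2,\dots,\tau_\ell,1^2),\ \dots,\ (\tau_2,\tau_2,\tau_3,\dots,\tau_\ell,1^{\tau_1-\tau_2}),$$
of cardinality $1+\tau_1-\tau_2$ (this is right for point strings too, where $\tau_1=\tau_2$ and the cardinality is $1$). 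So the string with terminal element $\tau$ has cardinality $n$ if $\tau=(n)$ and $1+\tau_1-\tau_2$ otherwise; since a non-first terminal element satisfies $\tau_1\leq n-\tau_2\leq n-2$, every string but the first has cardinality at most $n-3$, in particular less than $n$.

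Next I would reduce the statement to a counting identity. For any partition $J$ one has $\#\{i:J_i\geq k\}=J'_k$, so $J=\la(n)'$ is equivalent to $\#\{i:J_i\geq k\}=\la(n)_k$ for all $k\geq 1$; as the parts $J_i$ are exactly the string cardinalities $i_1,\dots,i_t$, this says
$$\#\{\text{full strings of cardinality}\geq k\}=\la(n)_k\qquad\text{for all }k\geq 1.$$
Recall that full strings biject with their terminal elements, which are the partitions of $n$ having no part equal to $1$. For $1\leq k\leq n-1$ I would prove the identity by means of the map sending a full string, through its terminal element $\tau$, to $(\tau_1-(k-1),\tau_2,\tau_3,\dots)$: this is a bijection from the full strings of cardinality $\geq k$ onto the partitions of $n-k+1$ with no part $1$ (it is well defined since $\tau=(n)$ goes to $(n-k+1)$, which has no part $1$ as $k\leq n-1$, and since for $\tau\neq(n)$ the inequality $\tau_1-\tau_2\geq k-1$ gives $\tau_1-(k-1)\geq\tau_2\geq 2$; and $\sigma\mapsto(\sigma_1+(k-1),\sigma_2,\dots)$ is its inverse). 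There being $p(n-k+1)-p(n-k)$ partitions of $n-k+1$ with no part $1$, the number of strings of cardinality $\geq k$ equals $p(n-k+1)-p(n-k)=\la(n)_k$. For $k=n$ the first string is the only one of cardinality $\geq n$, so the count is $1=p(1)=\la(n)_n$; and for $k>n$ the count is $0=\la(n)_k$. This gives the displayed identity, hence the lemma.

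The point requiring the most care is the exceptional behavior of the first string, whose terminal element is $(n)$: its chain of preimages terminates at $(1^n)$ rather than at a partition of the form $(\tau_2,\tau_2,\dots)$ — equivalently, as observed in Example~\ref{E:P6-ordering}, it is the unique string whose eigenvalue set (in the sense of Remark~\ref{psum}) skips a value, namely $n$. The bijection above absorbs this uniformly for $k\leq n-1$, but the case $k=n$ must be handled separately, using the bound $\tau_1\leq n-2$ for non-first terminal elements to conclude that no string has cardinality $n-1$ or $n-2$, consistent with $\la(n)_{n-2}=\la(n)_{n-1}=\la(n)_n=1$.
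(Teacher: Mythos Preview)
Your proof is correct and follows essentially the same route that the paper sketches via Example~\ref{E:P6-ordering}: both arguments reduce to showing that the number of full strings of cardinality at least $k$ equals $\la(n)_k=p(n-k+1)-p(n-k)$. The paper phrases this through the eigenvalue tableau (the $k$th element of a string has $m_1=k-1$, and there are $\la(n)_k$ such partitions), whereas you make the bijection explicit on terminal elements via $\tau\mapsto(\tau_1-(k-1),\tau_2,\dots)$; unwinding, these are the same map, and your version is simply a cleaner formalization of what the paper leaves to the reader.
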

Note that $i_1,i_2,\dotsc,i_t$ are not necessarily in weakly
decreasing order.

\subsection{The transition matrix with respect to the new ordering}

Let $A=(a_{\mu\la})$ be the $p(n)\times p(n)$ matrix of the action of
 $\frac{\partial}{\partial p_1}p_1$ on the basis $h_\la$, i.e.,
 $\frac{\partial}{\partial p_1}p_1\cdot h_\la=\sum_\mu a_{\mu\la}h_\mu$. Here we use the total
 ordering $\preceq$, with the greatest partition $(n)$ corresponding
 to the first row and column. Recall that the notation
 $\frac{\partial}{\partial p_1}v$ means that we write $v$ as a
 polynomial in the power sums $p_1,p_2,\dots$ and regard each $p_i$,
 $i\geq 2$, as a constant when we differentiate.

 \begin{lemma}\label{L:PropertyOfA}
 The matrix $A=(a_{\mu\la})$ has the following properties.
  \be
 \item $a_{\mu\la}\neq 0$ only if $\la\nwarrow\mu$, $\mu=\la$ or
 $T(\mu)>T(\la)$ (greater but not equal in dominance order).
 \item $a_{\mu\la}=1$ if $\la\nwarrow\mu$;\\
     $a_{\la\la}=m_1(\la)+1$;\\ %\\$a_{\mu\la}=m_1(\la)+1$ if and only if $\mu=\la$;--not true\\
     $m_1(\mu)$ equals $m_1(\la)+1$ or $m_1(\la)+2$ if $a_{\mu\la}\neq
 0$ and $T(\mu)>T(\la)$.
% \item $a_{\mu\la}=0$ if $\mu$ is a terminal element and $\la\neq\mu$.---This is included
% by the following strict triangularity of A_{kl} (for $k<l$).
 \ee
 \end{lemma}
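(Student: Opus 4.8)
The plan is to compute the action of $\frac{\partial}{\partial p_1}p_1$ on $h_\la$ directly, using the generating-function identity $\sum_{k\ge 0} h_k t^k = \prod_i (1-x_i t)^{-1} = \exp\!\left(\sum_{j\ge 1} \frac{p_j}{j} t^j\right)$. From this one reads off $\frac{\partial h_k}{\partial p_1} = h_{k-1}$, and hence by the product rule, writing $h_\la = h_{\la_1} h_{\la_2}\cdots$, one gets $\frac{\partial}{\partial p_1} h_\la = \sum_{r} h_{\la_1}\cdots h_{\la_r - 1}\cdots$. Multiplying by $p_1 = h_1$ first (so $p_1 h_\la = h_1 h_\la = h_{(\la,1)}$ after reordering) and then differentiating, or differentiating and then multiplying, gives an explicit expansion of $T_1(h_\la)$ as a nonnegative integer combination of $h_\mu$'s. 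The combinatorics of which $\mu$ appear is exactly: either $\mu$ is obtained from $(\la,1)$ by lowering one part (and reordering), or $\mu = (\la,1)$ reordered. I would set up this expansion cleanly, then read off each claimed property.

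Concretely, for part (1) and the first two lines of part (2): multiplying $h_\la$ by $p_1$ inserts a new part $1$, so $p_1 h_\la = h_{\la\cup\{1\}}$; then $\frac{\partial}{\partial p_1}$ lowers each part in turn. Lowering the newly inserted $1$ down to $0$ returns $h_\la$ itself, and this can happen in $m_1(\la)+1$ ways (the $m_1(\la)$ original $1$'s plus the inserted one each yield $h_\la$ upon being deleted), giving $a_{\la\la} = m_1(\la)+1$; lowering a part $\la_k \ge 2$ by one, or lowering an original $1$ (which also must be the inserted-part case already counted) — more precisely, lowering a part of size $\ge 2$ to size $\ge 1$ produces some $\mu$. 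The case $\la \nwarrow \mu$ means $\mu = \la^+$, i.e. $\la$ has a part $1$ and $\mu$ is obtained by deleting that $1$ and incrementing $\la_1$; I need to check that the coefficient of $h_{\la^+}$ is exactly $1$, which follows because the only way to reach $\la^+$ from $\la\cup\{1\}$ by lowering one part is to lower the inserted $1$-part — wait, that gives back $\la$ — rather, $\la^+$ has the same size as $\la$, so it must come from lowering a part of $\la\cup\{1\}$ that has size $2$ larger... let me instead recall that $T_1$ preserves degree $n$, and $\mu$ ranges over partitions of $n$; the term $h_{\la^+}$ arises because $(\la^+)\cup\{1\}$ differs from $\la\cup\{1\}$ by moving a box, and one checks the multiplicity is $1$ by a short case analysis on $m_1$ and $\la_1$ vs $\la_2$.

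For the support condition $T(\mu) \ge T(\la)$ in dominance when $\mu \ne \la, \la^+$: the key observation is that every $\mu$ appearing (other than these two) is obtained from $\la$ by a sequence of the moves ``delete a $1$, increment a part'' composed with possibly creating a $1$, and each such elementary move either stays in the same string or moves to a string whose terminal partition dominates. I would make this precise by showing $T(\mu) \ge T(\la)$ coordinatewise in dominance: deleting a $1$ and adding it to a larger part is exactly the move $\la \to \la^+$ which is a dominance increase, and the terminal element $T(\la)$ is reached by iterating $\nwarrow$, which only increases dominance; since all the $\mu$'s in the support are reached from $\la\cup\{1\}$ by lowering one part, and $T$ of any such is $\ge T(\la)$, the claim follows. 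For the last line of part (2), $m_1(\mu) \in \{m_1(\la)+1, m_1(\la)+2\}$: starting from $\la \cup \{1\}$ which has $m_1(\la)+1$ parts equal to $1$, lowering a part of size $\ge 3$ keeps $m_1$ unchanged (so $m_1(\mu) = m_1(\la)+1$), lowering a part of size exactly $2$ increases it by one (so $m_1(\mu) = m_1(\la)+2$), and lowering a part of size $1$ decreases it — but that case gives $\mu = \la$, already excluded. So in the genuinely off-diagonal, non-$\nwarrow$ cases we get exactly $m_1(\la)+1$ or $m_1(\la)+2$.

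The main obstacle I anticipate is not the algebra of the $h$-expansion — that is a routine application of the exponential generating function — but rather the bookkeeping that disentangles the three overlapping cases ($\mu = \la$, $\mu = \la^+$ i.e. $\la \nwarrow \mu$, and $T(\mu) > T(\la)$) and correctly attributes multiplicities, since the partition $\la \cup \{1\}$ reordered can have coincidences among its parts that make several distinct ``lowering'' operations produce the same $\mu$. I would handle this by working with the multiset of parts of $\la \cup\{1\}$ and carefully counting, for each target $\mu$, the number of parts whose decrement yields $\mu$; the diagonal entry $m_1(\la)+1$ is the cleanest instance of this count, and the entries of size $1$ on the super-diagonal ($\la \nwarrow \mu$) require checking that exactly one part works. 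The dominance comparison $T(\mu) > T(\la)$ is then a soft consequence of the fact that all these moves push boxes ``up and to the left'' in the Young diagram once we pass to terminal elements.
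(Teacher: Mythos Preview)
Your approach is essentially the paper's: compute $\frac{\partial}{\partial p_1}(p_1 h_\la)$ directly using $\partial h_k/\partial p_1 = h_{k-1}$ and the product rule, then read off the coefficients. However, there are two concrete issues worth flagging.

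First, you have the direction of $\nwarrow$ reversed. By definition $\la^+ \nwarrow \la$, so $\la \nwarrow \mu$ means $\la = \mu^+$, not $\mu = \la^+$. Concretely, $\mu$ is obtained from $\la$ by \emph{decrementing} the largest part and \emph{adding} a part $1$ (provided $\la_1 > \la_2$). This is exactly why your paragraph ``wait, that gives back $\la$ \dots'' goes in circles: you are looking for the wrong $\mu$. Once corrected, the coefficient $a_{\mu\la}=1$ is immediate, since lowering any part $\la_k$ with $k\ge 2$ leaves the first part equal to $\la_1$, whereas this particular $\mu$ has first part $\la_1-1$.

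Second, the bookkeeping you anticipate as the main obstacle is in fact trivialized by one device the paper uses: write $h_\la = h_{\la_1}\cdots h_{\la_i}\, h_1^{\,j}$ with $\la_1\ge\cdots\ge\la_i\ge 2$. Then
\[
\frac{\partial}{\partial p_1}\bigl(h_1\cdot h_\la\bigr)
= (j+1)\,h_\la + \sum_{k=1}^{i} h_{\la_1}\cdots h_{\la_k-1}\cdots h_{\la_i}\,h_1^{\,j+1},
\]
and the three cases separate themselves: the first term gives $a_{\la\la}=m_1(\la)+1$; the $k=1$ summand (when $\la_1>\la_2$) gives the unique $\mu$ with $\la\nwarrow\mu$; and every remaining summand, after re-sorting, has the form $(\la_1,\dots,\la_r-1,\dots,\la_i,1^{j+1})$ for some $r>1$. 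For these, the paper simply writes down $T(\mu)$ and $T(\la)$ explicitly:
\[
T(\la)=(\la_1+j,\la_2,\dots,\la_i),\qquad
T(\mu)=(\la_1+j+1,\la_2,\dots,\la_r-1,\dots,\la_i)\ \text{or}\ (\la_1+j+2,\la_2,\dots,\la_{i-1}),
\]
and the dominance inequality $T(\mu)>T(\la)$ is then visible by inspection. Your ``soft consequence'' sketch via iterated box-pushing can be made to work, but the explicit computation is both shorter and leaves no room for error. Your argument for $m_1(\mu)\in\{m_1(\la)+1,\,m_1(\la)+2\}$ is correct as stated.
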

\begin{proof}
Let $h_\la=h_{\la_1}\dotsm h_{\la_k}\dotsm h_{\la_i}h_1^j$, with
$\la_1\geq\dotsm\geq\la_i\geq2$. From e.g.\ the basic identity
  $$ \sum_{m\geq 0}h_mt^m = \exp \sum_{i\geq 1}p_i\frac{t^i}{i} $$
it follows that $\frac{\partial h_m}{\partial p_1} = h_{m-1}$.
Then (since $h_1=p_1$)
\begin{align*}\label{F:action}
\frac{\partial}{\partial p_1}p_1\cdot h_\la&=\frac{\partial}{\partial
p_1}h_{\la_1}\dotsm h_{\la_k}\dotsm h_{\la_i}
              h_1^{j+1}\\\nonumber
              &=(j+1)h_\la+\sum_{k=1}^i h_{\la_1}\dotsm
              h_{\la_k-1}\dotsm h_{\la_i}h_1^{j+1}.
\end{align*}
We see that $a_{\la\la}=m_1(\la)+1$. Let $\mu$ be the partition such
that \begin{align*}
h_{\la_1}\dotsm h_{\la_k-1}\dotsm h_{\la_i}h_1^{j+1}=h_\mu,\ \ 1\leq
k\leq i .
\end{align*}
In the case that $k=1$ and $\la$ is not an initial element, i.e.,
$\la_1>\la_2$, then $\mu=(\la_1-1,\la_2,\dots, \la_i,1^{j+1})$ and
thus $\la\nwarrow\mu$.

In the other cases, $\mu$ is of the form
$\mu=(\la_1,\la_2,\dots,\la_r-1,\dots, \la_i,1^{j+1})$ for some
$1< r\leq i$. Hence
$T(\mu)=(\la_1+j+1,\la_2,\dots,\la_r-1,\dots, \la_i)$ or
$T(\mu)=(\la_1+j+2,\la_2,\dots, \la_{i-1})$ (when $i=r$ and
$\la_r=2$). Note that $T(\la)=(\la_1+j,\la_2,\dots,\la_i)$. We see
that $T(\mu)>T(\la)$ (in dominance order). This proves (1) and
(2). 
\end{proof}

In the following we set $a'_i=i$ for $i=1,2,\dots,n-1$ and
$a'_n=n+1$.
We separate rows and columns corresponding to different strings and
write $A$ in the block matrix form $A=(A_{kl})_{t\times t}$. We have
the following properties of these $A_{kl}$'s by
Lemma \ref{L:PropertyOfA}. 
 \be
 \item[(1)] For $k>l$, $A_{kl}=0$.
 \item[(2)] $A_{kk}$ is an $i_k\times i_k$ lower triangular matrix.
Its diagonal entries are $a'_1,a'_2,\dots, a'_{i_k}$; the entries on
the line right below and parallel to the diagonal are all $1$ and all
the other entries are 0. \ee 
It looks as follows:

\footnotesize%\scriptsize%\fotnotesize 
\[ A=\left[\begin{array}{ccccccccccccc}
 a'_1 &   &    &   &b^{12}_{11}   &b^{12}_{12}   &\cdots   & b^{12}_{1i_2}  &  \cdots & b^{1t}_{11}  & b^{1t}_{12} &\cdots&b^{1t}_{1i_t}\\[.1in]
 1  &a'_2 &    &   &b^{12}_{21}&b^{12}_{22}   &\cdots   & b^{12}_{2i_2}
 & \cdots  & b^{1t}_{21}   &b^{1t}_{22}&\cdots&b^{1t}_{2i_t}  \\
    &\ddots  &\ddots  &  &  \cdots & &  &   &   &  \cdots\ &  && \\
    &   &  1 &a'_{i_1} &  b^{12}_{i_11}&b^{12}_{i_12}&\cdots  &
 b^{12}_{i_1i_2}&  \cdots&   b^{1t}_{i_11}&  b^{1t}_{i_12}   &\cdots&
 b^{1t}_{i_1i_t}\\[.1in]
    &   &    &   &a'_1 &   &   &   & \cdots  & b^{2t}_{11}  & b^{2t}_{12} &\cdots&b^{2t}_{1i_t}\\[.1in]
    &   &    &   &1  &a'_2 &   &   &   \cdots  & b^{2t}_{21}   &b^{2t}_{22}&\cdots&b^{2t}_{2i_t} \\
    &   &    &   &   &\ddots &\ddots &   &  & \cdots& &&  \\
    &   &    &   &   &   &1  &a'_{i_2} &  \cdots  &   b^{2t}_{i_21}&
 b^{2t}_{i_22}   &\cdots& b^{2t}_{i_2i_t} \\
    &   &    &   &   &   &   &  &\ddots &    &&& \\
    &   &    &   &   &   &   &   &   &a'_1 & &&  \\
    &   &    &   &   &   &   &   &   &  1 &a'_2&& \\
        &   &    &   &   &   &   &   &   &   &\ddots&\ddots&\\
            &   &    &   &   &   &   &   &   &   &&1&a'_{i_t}\\

\end{array}\right] .\]\normalsize
Furthermore, we have
\begin{align}\label{strictupper}
b^{kl}_{ij}=0 \mbox{ if }i\leq j,
\end{align}
 i.e., $A_{kl}$ is a strict lower triangular matrix if $k<l$.  The reason is that $b^{kl}_{ij}=a_{\la^{ki}\la^{lj}}$,
  and if it is nonzero, then we have the following by Lemma \ref{L:PropertyOfA}:
%reason is that if $b^{kl}_{ij}\neq0$, then $h_{\la^{ki}}$ appears in
%$\frac{\partial}{\partial p_1}p_1\cdot h_{\la^{lj}}$ (as) and thus
 $$i-1=m_1(\la^{ki})\geq m_1(\la^{lj})+1=j-1+1=j.$$
(Here we use that $m_1(\la^{pr})=r-1$ for $\la^{pr}\neq(1^n)$. It is possible that $\la^{ki}=(1^n)$, but then again $i=n\geq i_l+1\geq j+1$.)

%\begin{remark}\label{R:idea1}
Now we replace the $a'_i$ on the diagonal of $A$ with an arbitrary
$f_i=f_i(x)\in\zz[x]$ and change $A$ into a matrix $A(x)$ with entries
in $\zz[x]$. We will apply some row/column operations to $A(x)$ and
transform it into an SNF in $\zz[x]$. (Some of these operations
depend on the $f_i$'s.) This is the same as to say that there are
$P_1(x),Q_1(x)\in \mathrm{SL}(p(n),\zz[x])$  such that
$P_1(x)A(x)Q_1(x)$ is an SNF in $\zz[x]$.

 Notice that the original matrix $M$ is equal to $PAP^{-1}$ for some
 $P\in \mathrm{SL}(p(n),\zz)$. If we take $f_i=a'_i+x$ (which is $a_i(x)$ in our theorem) in the beginning,
 then $A(x)=A+xI_{p(n)}$, and thus the SNF $P_1(x)A(x)Q_1(x)$ is equal
 to
 $P_1(x)(P^{-1}MP+xI_{p(n)})Q_1(x)=P_1(x)P^{-1}(M+xI_{p(n)})PQ_1(x)$,
 as desired.
%\end{remark}
\subsection{Transformation into an upper triangular matrix}

If we use horizontal lines to separate rows of $A(x)$ corresponding to
different full strings,
then $A(x)$ is partitioned into $t$ submatrices. We see that we
should consider a matrix of the following form:
\[ B=\left[\begin{array}{cccccccc}
 f_1&       &       &      &b_{11}   &b_{12}&\cdots & b_{1m}  \\
 1  &f_2    &       &      &b_{21}   &b_{22}&\cdots & b_{2m}  \\
    &\ddots &\ddots &      &\cdots   &      &       &      \\
    &       &  1    &f_{s} &b_{s1}   &b_{s2}&\cdots & b_{sm}
\end{array}\right].\]
We can apply row operations to $B$ and transform it first into
\[ B_1=\left[\begin{array}{cccccccc}
 1  &f_2     &        &      &  b_{21} &b_{22}&\cdots & b_{2m}\\
    &\ddots  &\ddots  &      &  \cdots &      &       &       \\
    &        &  1     &f_{s} &  b_{s1} &b_{s2}&\cdots & b_{sm} \\
 f_1&        &        &      &b_{11}   &b_{12}&\cdots &  b_{1m}
\end{array}\right],\]
and then into
\[ B_2=\left[\begin{array}{cccccccc}
 1  &f_2    &        &      &  b_{21} &b_{22} &\cdots & b_{2m}  \\
    &\ddots &\ddots  &      &  \cdots &       &       &       \\
    &       &  1     &f_{s} &  b_{s1} &b_{s2} &\cdots & b_{sm} \\
 0  & \cdots &  0      &\alpha&\beta_1  &\beta_2&\cdots & \beta_m
\end{array}\right] ,\]

\noindent with $\alpha=f_1\dotsm f_s$ and
 $$ (-1)^{s-1}\beta_j=b_{1j}-f_1b_{2j}+f_1f_2b_{3j}+\dotsm+(-1)^{s-1}f_1\dotsm
    f_{s-1}b_{sj}. $$
Apply this process to the $t$ submatrices of $A(x)$. We turn $A(x)$
    into the matrix
\[ A_1(x)=
\left[\begin{array}{ccccccccccccc}
 1 & f_2            &         &                &b^{12}_{21}  &b^{12}_{22}   &\cdots
   &b^{12}_{2i_2}   & \cdots  & b^{1t}_{21}    &b^{1t}_{22}  &\cdots        &b^{1t}_{2i_t}\\
   & \ddots         &\ddots   &                &  \cdots     &              &
   &                &         &  \cdots        &             &              & \\
   &                &1        & f_{i_1}        &b^{12}_{i_11}&b^{12}_{i_12} &\cdots
   &b^{12}_{i_1i_2} & \cdots  & b^{1t}_{i_11}  &b^{1t}_{i_12}&\cdots        &b^{1t}_{i_1i_t} \\[.5em]
   &                &         &\al_{i_1}       &\beta^{12}_1 &\beta^{12}_2  &\cdots
   &\beta^{12}_{i_2}&  \cdots &   \beta^{1t}_1 &\beta^{1t}_2 &\cdots        & \beta^{1t}_{i_t}\\[.5em]
   &                &         &                &1            & f_2          &
   &                & \cdots  & b^{2t}_{21}    &b^{2t}_{22}  &\cdots        &b^{2t}_{2i_t}\\
   &                &         &                &             &\ddots        &\ddots
   &                &         & \cdots         &             &              &  \\
   &                &         &                &             &              &1
   & f_{i_2}        & \cdots  & b^{2t}_{i_21}  &b^{2t}_{i_22}&\cdots        &b^{2t}_{i_2i_t} \\[.5em]
   &                &         &                &             &              &
   &\alpha_{i_2}    & \cdots  &  \beta^{2t}_{1}&\beta^{2t}_2 &\cdots        & \beta^{2t}_{i_t} \\
   &                &         &                &             &              &
   &                &\ddots   &                &             &              & \\
   &                &         &                &             &              &
   &                &         &1               &f_2          &              &  \\
   &                &         &                &             &              &
   &                &         &                &\ddots       &\ddots        &\\
   &                &         &                &             &              &
   &                &         &                &             &1             &f_{i_t} \\
   &                &         &                &             &              &
   &                &         &                &             &              &\alpha_{i_t}\\

\end{array}\right] ,\]\normalsize

where $\al_k=f_1f_2\dotsm f_{k}$ and
\begin{align*}
(-1)^{i_k-1}\beta^{kl}_j&=b^{kl}_{1j}-f_1b^{kl}_{2j}+f_1f_2b^{kl}_{3j}+\dotsm+(-1)^{i_k-1}f_1f_2\dotsm
        f_{i_k-1}b^{kl}_{i_kj}.\\
      % &=b^{kl}_{1j}-\al_1b^{kl}_{2j}+\al_2b^{kl}_{3j}-\dotsm+(-1)^{i_k-1}\al_{i_k-1}b^{kl}_{i_kj}.
\end{align*}

Recalling that $b^{kl}_{ij}=0$ for $i\leq j$ (see (\ref{strictupper})), we find that
\begin{equation}\label{P:crucial}
f_1f_2\cdots f_j\mid \beta^{kl}_j, \text{ and as a special case,
}\alpha_{i_l}\mid \beta^{kl}_{i_l}.
\end{equation}
This property is crucial for later cancellation.
\begin{remark}\label{R:idea2}
Next we will show that we can cancel the nondiagonal entries without altering the diagonal.
Then by the definition of $\al_k$, we see that the matrix $A(x)$ has the Smith normal form
\begin{align*}
\mathrm{diag}(1,1,\dotsc,1,\al_{j_1},\al_{j_2},\dotsc,\al_{j_t}),
\end{align*}
where $(j_1,\dotsc,j_t)$ is the rearrangement of $i_1,\dotsc, i_t$ in
weakly increasing order. Combining Lemma \ref{L:stringcardinality}, everything in our theorem is clear now.

\end{remark}
\subsection{The cancellation of the nondiagonal entries}
Now we want to cancel the nondiagonal elements, completing the proof.
For those nonzero elements above the diagonal, we can do the
following.
 \be
 \item[(C1)] First apply column operations to cancel the entries on
 the rows with diagonal elements equal to $1$ (starting from the first
 row).
 \item[(C2)] Then apply row operations to cancel the entries on the
 columns with diagonal elements equal to $1$.
 \ee
The matrix turns into the following:

\[ A_2(x)=\left[\begin{array}{ccccccccccccc}
 1 &   &    &   &0&0   &\cdots   & 0 & \cdots  & 0  &0&\cdots&0\\
    & \ddots &\ddots  &  &  \cdots & &  &   &   &  \cdots\ &  && \\
    &&1 &       &0&0   &\cdots   & 0 & \cdots  & 0   &0&\cdots&0  \\
    &   &  &\al_{i_1} &  0&0&\cdots  & \beta^{12}&  \cdots& 0  &  0   &\cdots& \beta^{1t}\\
    &   &    &  &1 &   &   &   &  & 0   &0&\cdots&0\\
    &   &    &   &   &\ddots &\ddots &   &  & \cdots& &&  \\
    &   &    &&   &  &1 &   &  &      0   &0&\cdots&0 \\
    &   &    &   &   &   & &\alpha_{i_2} &   &   0&  0  &\cdots& \beta^{2t} \\
    &   &    &   &   &   &   &  &\ddots &    &&& \\
    &   &    &   &   &   &   &   &   &1 & &&  \\
    &   &    &   &   &   &   &   &   &   &1&& \\
        &   &    &   &   &   &   &   &   &   &&\ddots&\\
            &   &    &   &   &   &   &   &   &   &&&\alpha_{i_t}\\

\end{array}\right]. \]
The only entries we cannot cancel in (C1) and (C2) are those on the
intersection of rows and columns with $\alpha_{i_l}$'s, i.e., the
$\beta^{kl}$'s in $A_2(x)$. If we can prove that each $\beta^{kl}$ is a
multiple of $\al_{i_l}$, then we can apply row operations to cancel
all those $\beta^{kl}$'s, and we are done.

To see this, let us first go back to $A_1(x)$. We know that at the
beginning $\beta^{kl}_{i_l}$ was a multiple of $\al_{i_l}$ by
(\ref{P:crucial}).  Then this entry was changed to $\beta^{kl}$ after
we applied (C1) and (C2). More precisely, (C1) changed it but (C2) did
not.  If we look more closely, we find that terms were added to
$\beta^{kl}_{i_l}$ only when we were doing the column operations to
cancel the nonzero entries below this entry $\beta^{kl}_{i_l}$. We claim
that each term which was added to this entry was actually a multiple
of $\al_{i_l}$. Thus the new term $\beta^{kl}$ in $A_2(x)$ is still a
multiple of $\al_{i_l}$.

Now let us prove our claim. For simplicity we consider only the entry
$\beta^{1t}_{i_t}$. The general
case can be treated similarly. The entries which were below this
$\beta^{1t}_{i_t}$ and which were canceled in (C1) were
$b^{kt}_{ci_t}$ ($2\leq k\leq t-1$, $2\leq c\leq i_k$) together with
$f_{i_t}$ which was right above $\al_{i_t}$.  \be \item[(a)] If
$b^{kt}_{ci_t}$ was nonzero, then $c\geq i_t+1$ by (\ref{strictupper}).
To cancel this $b^{kt}_{ci_t}$, we added $-b^{kt}_{ci_t}$ times the
$\la^{k,(c-1)}$ column (i.e., the column indexed by $\la^{k,(c-1)}$)
to the $\la^{ti_t}$ column. Thus  
$-b^{kt}_{ci_t}\beta^{1k}_{c-1}$ was added to $\beta^{1t}_{i_t}$. By
the fact (\ref{P:crucial}), $f_1\dotsm f_{c-1}\mid
\beta^{1k}_{c-1}$. But $c-1\geq i_t$, so this term added to $\beta^{1t}_{i_t}$
did have $f_1\dotsm f_{i_t}=\al_{i_t}$ as a factor.
 \item[(b)] To cancel
$f_{i_t}$, we added $-f_{i_t}$ times the $\la^{t,i_t-1}$ column to the
$\la^{ti_t}$ column. This added $-f_{i_t}\beta^{1t}_{i_t-1}$ to
$\beta^{1t}_{i_t}$. But again $f_1\dotsm f_{i_t-1}\mid \beta^{1t}_{i_t-1}$ by
(\ref{P:crucial}); we see that this term added is a multiple of
$\al_{i_t}$. \qed
 
\ee

% \textbf{**Should we use this argument for general $\beta^{kl}_{i_l}$ :}
%Now let us prove our claim. The entries which were below this
%$\beta^{kl}_{i_l}$  and which were canceled in (C1) were
%$b^{rl}_{ci_l}$ ($k+1\leq r\leq l-1$, $2\leq c\leq i_r$) together with
%$f_{i_l}$ which was right above $\al_{i_l}$.
% \be \item[(a)] If
%$b^{rl}_{ci_l}$ was nonzero, then $c\geq i_l+1$ by (\ref{strictupper}).
%To cancel this $b^{rl}_{ci_l}$, we added $-b^{rl}_{ci_l}$ times the
%$\la^{r,(c-1)}$ column to the $\la^{li_l}$ column. Thus
%$-b^{rl}_{ci_l}\beta^{kr}_{c-1}$ is added to $\beta^{kl}_{i_l}$. By
%the fact (\ref{P:crucial}), $f_1\dotsm f_{c-1}\mid
%\beta^{kr}_{c-1}$. But $c-1\geq i_l$, so this term added to $\beta^{kl}_{i_l}$
%did have $f_1\dotsm f_{i_l}=\al_{i_l}$ as a factor.
% \item[(b)] To cancel
%$f_{i_l}$, we added $-f_{i_l}$ times the $\la^{l,i_l-1}$ column to the
%$\la^{li_l}$ column. Thus added $-f_{i_l}\beta^{kl}_{i_l-1}$ to
%$\beta^{kl}_{i_l}$. But again $f_1\dotsm f_{i_l-1}\mid \beta^{kl}_{i_l-1}$ by
%(\ref{P:crucial}); we see that this term added was a multiple of
%$\al_{i_l}$.\qed
%\ee

\section{A conjecture.}
We conjecture that our theorem can be generalized to the action
$k\frac{\partial}{\partial p_k}p_k$ for $k\geq 1$.
\begin{conjecture}\label{conj}
Let $M_k^{(n)}$ be the matrix of the map $k\frac{\partial}{\partial
p_k}p_k$ with respect to an integral basis for homogeneous symmetric
functions of degree $n$. Then there exists
$P(x),Q(x)\in\mathrm{SL}(p(n),\zz[x])$ such that
$P(x)(M_k^{(n)}+xI_{p(n)})Q(x)$ is the diagonal matrix
$\mathrm{diag}(f_1(x),\dotsc,f_{p(n)}(x))$, where $f_i(x)$ may be
described as follows. Let $\mathcal{M}$ be the multiset
of all numbers $m_k(\lambda)$ for $\lambda\vdash n$. First,
$f_{p(n)}(x)$ is a product of factors $x+k(a_i+1)$ where the $a_i$'s are
the \emph{distinct} elements of $\mathcal{M}$. Then $f_{p(n)-1}(x)$ is
a product of factors $x+k(b_i+1)$ where the $b_i$'s are the
remaining \emph{distinct} elements of $\mathcal{M}$, etc. (After a
while we will have exhausted all the elements of $\mathcal{M}$. The
remaining diagonal elements are the empty product 1.)
\end{conjecture}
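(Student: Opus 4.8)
The plan is to follow the strategy of Section~2. Because the Smith normal form is unchanged under an integral change of basis, I would again work with $\{h_\la:\la\vdash n\}$. From $\sum_m h_m t^m=\exp\sum_i p_i t^i/i$ one gets $k\,\partial h_m/\partial p_k=h_{m-k}$, so for $h_\la=h_{\la_1}\dotsm h_{\la_\ell}$,
\[
 k\frac{\partial}{\partial p_k}p_k\cdot h_\la
  \;=\; k\,h_\la \;+\; p_k\!\sum_{c:\ \la_c\ge k} h_{\la_c-k}\prod_{c'\neq c}h_{\la_{c'}} .
\]
Expanding $p_k=\sum_{\rho\vdash k}\varepsilon_\rho h_\rho$ (with $\varepsilon_{(k)}=k$, $\varepsilon_{(1^k)}=(-1)^{k-1}$, and so on), every basis element $h_\mu\neq h_\la$ occurring on the right is obtained by \emph{refining} a part of $\la$, hence $\mu<\la$ in dominance order, while the coefficient of $h_\la$ itself is $k+k\,m_k(\la)=k(m_k(\la)+1)$. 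So $M_k^{(n)}$ is lower triangular with respect to any linear extension of dominance order, with diagonal entries the eigenvalues $k(m_k(\la)+1)$ of Remark~\ref{psum}; establishing this is routine bookkeeping.

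The substance is the reduction of $M_k^{(n)}+xI_{p(n)}$ to the asserted diagonal form by integral row and column operations. It is worth noting that the conjectured matrix is exactly the Smith normal form of $M_k^{(n)}+xI$ over the PID $\qq[x]$: since $M_k^{(n)}$ is semisimple over $\qq$ with eigenvalue $k(a+1)$ of multiplicity $\mu_a:=\#\{\la\vdash n:m_k(\la)=a\}$, its rational canonical form gives invariant factors $d_{p(n)-j+1}(x)=\prod_{a:\ \mu_a\ge j}(x+k(a+1))$, which is precisely the list $f_{p(n)},f_{p(n)-1},\dots$ in the conjecture. Thus the claim is that restricting from $\qq[x]$ to $\zz[x]$ introduces no new invariant factors. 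To produce the needed $\zz[x]$-operations I would try to develop a $k$-analogue of the string calculus of Section~2: define $\la^+$ (when $m_k(\la)\ge1$) by deleting one part equal to $k$ and adding $k$ to the largest part, call $\la$ \emph{initial} when $\la_1<\la_2+k$ and \emph{terminal} when $m_k(\la)=0$, group $\mathcal{P}_n$ into full strings, and order accordingly; along a string the eigenvalue drops by $k$ at each step, though it may now skip values (for $k=2$, $n=4$ the string through $(2,2)$ is $(4)\nwarrow(2,2)$, with $m_2$-values $0,2$). The hope would be to use row operations inside each diagonal block to push out a block of $1$'s, leave $\prod_{\nu\in S}(x+k(m_k(\nu)+1))$ at the bottom of the block of each string $S$, and then clear the off-diagonal blocks via a divisibility analogue of~\eqref{P:crucial}.

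I expect the cancellation step to be the real obstacle, and it is genuinely harder for $k\ge2$, because two structural features of Section~2 break down. First, the ``string edge'' entry $a_{\nu^-,\nu}$ now equals $k$ rather than $\pm1$ (it comes from the leading term $kh_k$ of $p_k$), so a $2\times2$ diagonal string block $\left(\begin{smallmatrix}x+k & 0\\ k & x+k'\end{smallmatrix}\right)$ need not even possess a Smith normal form over $\zz[x]$ when $\gcd(k,k'-k)>1$; hence the computation cannot be localized to individual strings the way it is for $k=1$. Second, the dichotomy of Lemma~\ref{L:PropertyOfA}(1) fails --- off-diagonal entries no longer respect the string order. (In the $n=4$, $k=2$ example one has $a_{(2,1,1),(4)}=-1$, linking the strings $\{(4),(2,2)\}$ and $\{(3,1),(2,1,1)\}$ in the ``wrong'' direction; it is exactly this \emph{unit} cross-string entry, coming from the $(-1)^{k-1}h_{(1^k)}$ term of $p_k$, that makes the Smith form exist, and the factor $x+4$ then migrates into the largest invariant factor coming from a different string's block.) So the argument must be global: one has to track, over the whole matrix, which $x+k(a+1)$ can be eliminated by units and which must accumulate into the larger $f_j$'s, and prove that the net result is the $\qq[x]$-Smith form. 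Making this bookkeeping precise --- or else finding a $\zz$-basis better adapted to $p_k$ (for instance one replacing $h_k$ by $p_k$ after clearing the denominator $k$), or arguing prime by prime together with the known $\qq[x]$-form --- is the step I would expect to be hard, and is presumably why the statement is only conjectural. I would also verify the conjectured shape for $n=5,6$ with $k=2,3$ before committing to a precise version of the string calculus, since the correct combinatorial organization (in particular whether strings with nested eigenvalue sets must be merged) is not yet clear.
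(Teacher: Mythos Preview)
The statement you were asked to prove is labelled \emph{Conjecture} in the paper, and the paper does not prove it. The authors establish only two special cases: $k=1$ (the main Theorem~\ref{thm1}, via the string calculus of Section~2) and $k>n/2$ (the Proposition at the end, whose proof is merely sketched and rests on the fact that a partition of $n$ has at most one $k$-border strip when $k>n/2$, so that $M_k^{(n)}$ decomposes into rank-one $\pm1$ blocks indexed by $\mu\vdash n-k$). For general $k$ the paper offers no argument.

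Your write-up is not a proof either, and you say so explicitly. What you have is a correct structural analysis together with an honest account of why the $k=1$ method does not extend. Your preliminary computations are right: the formula $k\,\partial h_m/\partial p_k=h_{m-k}$, the triangularity of $M_k^{(n)}$ in the $h$-basis under any linear extension of dominance, and the diagonal entries $k(m_k(\la)+1)$ all check out. Your observation that the conjectured diagonal is exactly the $\qq[x]$-Smith form of $M_k^{(n)}+xI$ is also correct and is a useful reformulation---it reduces the conjecture to showing that no extra integer content appears when one passes from $\qq[x]$ to $\zz[x]$.

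The two obstructions you isolate are genuine. The subdiagonal ``string'' entry is now $k$ (the coefficient of $h_k$ in $p_k$) rather than $1$, so a single string block need not have an SNF over $\zz[x]$ by itself; and the off-block entries produced by the other terms of $p_k$ (in particular the $(-1)^{k-1}h_{1^k}$ term) do not respect the string order, so the block-triangular structure underlying Lemma~\ref{L:PropertyOfA}(1) and the divisibility relation \eqref{P:crucial} is lost. Your $n=4$, $k=2$ example is exactly the right size to see both phenomena. None of this is wrong, but it leaves the conjecture open---which is also where the paper leaves it.
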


%\textbf{Richard} As we discussed, at least we should use
% $k\frac{\partial}{\partial p_k}p_k$ (or $k\frac{\partial}{\partial
% p_k}p_k$) instead of $\frac{\partial}{\partial p_k}p_k$:\\
% $s_{11}=\frac{1}{2}(p_{11}-p_2), s_2=\frac{1}{2}(p_{11}+p_2)$,\\
% $p_2\frac{\partial}{\partial p_2}\cdot
% s_{11}=p_2(-\frac{1}{2})=-\frac{1}{2}(s_2-s_{11})$ which is not of
% integer coefficients.

We can prove the following special case of the above conjecture.
The proof is based on the result that for a partition $\lambda\vdash
n$ there is at most one $k$-border strip if and only if $k>n/2$,
though we omit the details here.

 \begin{proposition}
% Let $2k>n\geq k>0$ such that $\langle
% p_k^*s_\la,p_k^*s_\la\rangle=0,1$ for all $\la\vdash n$.
If $k>n/2$, then an SNF of $M_k^{(n)}+xI_{p(n)}$ over $\zz[x]$ is
given by
\begin{equation*}\label{F:snf}
\mathrm{diag}(1,\dotsc,1,x+k,\dotsc,x+k,(x+k)(x+2k),\dotsc,(x+k)(x+2k)),
\end{equation*}
where there are $p(n-k)$ $1$'s and $p(n-k)$ $(x+k)(x+2k)$'s.
 \end{proposition}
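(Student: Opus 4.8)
The plan is to work in the Schur basis $\{s_\lambda:\lambda\vdash n\}$ and to use that $M_k^{(n)}$ is the operator $v\mapsto p_k^{\perp}(p_k v)$, where $p_k^{\perp}=k\,\partial/\partial p_k$ is the adjoint of multiplication by $p_k$ for the Hall inner product. On the Schur basis both operators are described by the Murnaghan--Nakayama rule: $p_k s_\mu=\sum_\sigma(-1)^{\mathrm{ht}(\sigma/\mu)}s_\sigma$, summed over $\sigma$ obtained from $\mu$ by adjoining a $k$-border strip, and $p_k^{\perp}s_\sigma=\sum_\mu(-1)^{\mathrm{ht}(\sigma/\mu)}s_\mu$, summed over $\mu$ obtained from $\sigma$ by deleting a $k$-border strip. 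Since $\partial/\partial p_k$ is a derivation of $\Lambda_{\qq}=\qq[p_1,p_2,\dots]$, applying it to $p_k v$ gives $v+p_k\cdot(\partial v/\partial p_k)$, so that
\[
M_k^{(n)}=kI+N,\qquad N v:=p_k\,(p_k^{\perp}v).
\]
Because multiplication by $p_k$ and $p_k^{\perp}$ both preserve the $k$-core of a partition, $N$ (hence $M_k^{(n)}+xI$) is block diagonal with respect to the $\zz$-module splitting of $\Lambda^n$ obtained by partitioning the Schur basis according to $k$-core.

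Here is where the stated fact enters. Because $k>n/2$, every $\lambda\vdash n$ has \emph{at most one} removable $k$-border strip. Hence $p_k^{\perp}s_\lambda=0$ when $\lambda$ is a $k$-core, and $p_k^{\perp}s_\lambda=(-1)^{\mathrm{ht}(\lambda/\nu)}s_\nu$ with $\nu=\mathrm{core}_k(\lambda)$ otherwise; moreover $|\nu|=n-k$, since $n<2k$ precludes removing two $k$-border strips. Therefore $Ns_\lambda=0$ for a $k$-core $\lambda$, and $Ns_\lambda=(-1)^{\mathrm{ht}(\lambda/\nu)}\,p_k s_\nu$ otherwise. Grouping the Schur basis by $k$-core, $\Lambda^n$ thus decomposes into $(M_k^{(n)}+xI)$-invariant $\zz$-summands: a summand $W_\circ$ spanned by the $s_\gamma$ with $\gamma\vdash n$ a $k$-core, on which $M_k^{(n)}+xI$ acts as $(x+k)I$; and, for each $k$-core $\nu$ with $|\nu|=n-k$, a summand $W_\nu$ spanned by $\{s_\mu:\mathrm{core}_k(\mu)=\nu\}$. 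Two elementary counts complete this picture: since $n-k<k$, \emph{every} partition of $n-k$ is a $k$-core, so there are exactly $p(n-k)$ of the $\nu$; and for a $k$-core $\nu$ the partitions obtained by adjoining a single $k$-border strip are precisely the $k$ partitions gotten by pushing the topmost bead up one step on each of the $k$ runners of the $k$-abacus of $\nu$, so $\mathrm{rank}_{\zz}W_\nu=k$ (whence $\mathrm{rank}_{\zz}W_\circ=p(n)-k\,p(n-k)$).

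Next I would compute the block on each $W_\nu$. In the basis $\{s_\mu:\mathrm{core}_k(\mu)=\nu\}$ (of cardinality $k$), the formula $Ns_\lambda=(-1)^{\mathrm{ht}(\lambda/\nu)}p_ks_\nu$ shows that the matrix of $N$ is the \emph{rank-one} matrix $\varepsilon\varepsilon^{\mathsf{T}}$, where $\varepsilon=\bigl((-1)^{\mathrm{ht}(\mu/\nu)}\bigr)_\mu\in\{\pm1\}^k$. So the block of $M_k^{(n)}+xI$ on $W_\nu$ is $\varepsilon\varepsilon^{\mathsf{T}}+(x+k)I_k$; conjugating by $\mathrm{diag}(\varepsilon)$ turns it into $J_k+(x+k)I_k$, where $J_k$ is the all-ones matrix. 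A short reduction over $\zz[x]$ — pass to the basis $e_1,e_2-e_1,\dots,e_k-e_1$, in which the matrix is lower triangular with diagonal $(y+k,y,\dots,y)$ and whose only other non-zero entries are the $1$'s in the first column, then use those units to clear everything — brings $J_k+yI_k$ to $\mathrm{diag}(1,\underbrace{y,\dots,y}_{k-2},\,y(y+k))$. Setting $y=x+k$, each $W_\nu$-block has Smith normal form $\mathrm{diag}\bigl(1,\,x+k,\dots,x+k,\,(x+k)(x+2k)\bigr)$ with $k-2$ middle entries, while $W_\circ$ contributes $x+k$ repeated $p(n)-k\,p(n-k)$ times. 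Finally I would assemble the blocks: every diagonal entry produced is $1$, $x+k$, or $(x+k)(x+2k)$, and these are totally ordered by divisibility in $\zz[x]$, so the block normal forms simply concatenate into a single normal form; counting, one obtains $p(n-k)$ ones, then $p(n)-k\,p(n-k)+(k-2)p(n-k)=p(n)-2p(n-k)$ copies of $x+k$, then $p(n-k)$ copies of $(x+k)(x+2k)$, exactly the asserted form. Replacing the intermediate $\mathrm{GL}(p(n),\zz[x])$-operations by $\mathrm{SL}(p(n),\zz[x])$-operations is routine, as in the proof of Theorem~\ref{thm1}; the degenerate cases $n<k$ (where every partition of $n$ is a $k$-core) and $k=1$ (already covered by Theorem~\ref{thm1}) are immediate.

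The only genuinely substantive step is the second paragraph: the inequality $k>n/2$ collapses the Murnaghan--Nakayama recursion so drastically that $M_k^{(n)}-kI$ is block diagonal with rank-one blocks, after which nothing remains but bookkeeping. The points requiring any care are the two combinatorial counts ($|\mathrm{core}_k(\mu)|=n-k$ and $\mathrm{rank}_{\zz}W_\nu=k$) and the explicit reduction of $J_k+yI_k$ — none of which is hard, presumably why the authors omit the details.
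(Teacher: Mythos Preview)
Your argument is correct and follows essentially the same route as the paper's (sketched) proof: use $k>n/2$ to force at most one removable $k$-border strip, so that in the Schur basis $M_k^{(n)}+xI=(x+k)I+N$ is block diagonal with one $(x+k)I$ block on the $k$-cores of size $n$ and, for each $\mu\vdash n-k$, a $k\times k$ block equal to a rank-one $\pm1$ matrix plus $(x+k)I$, whose SNF over $\zz[x]$ is $\mathrm{diag}(1,x+k,\dots,x+k,(x+k)(x+2k))$. The only cosmetic difference is that the paper obtains the block size $k$ from the identity $\langle p_k s_\mu,p_k s_\mu\rangle=k$, whereas you read it off the $k$-abacus; both are one-line observations.
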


Thus it is known that Conjecture~\ref{conj} is true for $k=1$ or $k>n/2$.

\bibliographystyle{amsalpha}

\begin{thebibliography}{ABCD}

\bibitem{M} A.R.Miller, The Smith normal form for the up-down map in Young's Lattice, REU Report, 
\texttt{www.math.umn.edu/$\sim$reiner/REU/Miller\_report.pdf}.

\bibitem{MR} A. Miller, V. Reiner, Differential posets and
Smith normal forms,
\emph{Order} \textbf{26} (2009), 197--228.
%

\bibitem{dposet} R. Stanley, Differential posets, \emph{J. Amer.\
Math.\ Soc.}\ \textbf{1} (1988), 919--961.

%
\bibitem{ec1} R. Stanley, \emph{Enumerative Combinatorics}, vol.\ 1,
second ed., Cambridge University Press, 2012.
%
\bibitem{ec2}R. Stanley, \emph{Enumerative Combinatorics},
  volume 2, Cambridge University Press, New York/Cambridge, 2001.

\end{thebibliography}

\end{document}